\renewcommand*\env@matrix[1][*\c@MaxMatrixCols c]{%
  \hskip -\arraycolsep
  \let\@ifnextchar\new@ifnextchar
  \array{#1}}
\newtheorem{dfn}{Definition}[section]
\newtheorem{thrm}[dfn]{Theorem}
\newtheorem{lm}[dfn]{Lemma}
\newtheorem{cor}[dfn]{Corollary}
\newcommand{\rem}{\noindent{\bf Remark.  }}
\numberwithin{equation}{section}
\title
[An Ambarzumian type theorem on graphs]{An Ambarzumian type theorem on graphs with odd cycles}
\author{M\'arton Kiss}
\thanks{This work was supported by the Hungarian NKFIH Grant SNN-125119.\newline
2010. Math. Subject Classification: Primary 34A55, 34B20, 34B24, 34B45; Secondary 34L40, 47A75\newline
Key words and phrases: Ambarzumian, inverse problems, inverse eigenvalue problem, differential equations on graphs, quantum graphs, Schr\"odinger operators, odd cycles}
\address
{Department of Differential Equations
\newline
\indent Institute of Mathematics
\newline
\indent Budapest University of Technology and Economics
\newline
\indent H 1111 Budapest, M\H{u}egyetem rkp. 3-9.}
\email{mkiss@math.bme.hu}
\begin{document}

\begin{abstract}
We consider an inverse problem for Schr\"odinger operators on a connected equilateral graph $G$ with standard matching conditions. The graph $G$ consists of at least two odd cycles glued together at a common vertex. We prove an Ambarzumian type result, i.e., if a specific part of the spectrum is the same as in the case of zero potential, then the potential has to be zero. 
\end{abstract}
\maketitle
\begin{section}{Introduction}

The aim of this paper is to make a statement in \cite{Yang20161348} more accurate and general. %
The addressed problem originates from a work of Ambarzumian \cite{A} on reconstruction of a differential operator from its eigenvalues. The material from the theory of Sturm-Liouville equations is summarized in \cite{B1, Horvath2005Annals, ChengWangWu2010, Horvath2015}, previous results for graphs are \cite{Pivovarchik2005ambarzumian, CarlsonPivovarchik2007ambarzumian, LawYanagida2012, Davies2013}. Another source of the problem is the so-called \emph{quantum graphs}, i.e., differential operators on graphs \cite{BerkolaikoKuchment2013book, PokornyiBorovskikh2004, Kuchment2008}). A third component is the calculation of \emph{spectral determinant}s (or alternatively \emph{functional determinant}s or \emph{characteristic function}s) \cite{CurrieWatson2005, KacPivovarchik2011, CarlsonPivovarchik2008SpectralAsymptotics, Pankrashkin2006, AkkermansComtetDesboisMontambauxTexier2000, Desbois2000, Friedlander2006, HarrisonKirstenTexier2012, Texier2010, MollerPivovarchik2015book}. For a more detailed discussion of these results see the Introduction in \cite{Kiss2016Ambarzumian}.  

\end{section}
\begin{section}{Results and discussion}

Let $r\ge2$ and consider $r$ cycle graphs $C_1,C_2,\ldots,C_r$ with odd cycle lengths $n_1,n_2,\ldots,n_r$ ($n_j=1$ is also possible). Let the vertices of $C_j$ be $v_{j0},\ldots,v_{j{n_j}}=v_{j0}$, and let us form the graph $G$ as the union of $C_j$'s, identifying the vertex $v_{j0}$ for all $j$. We shall say that $G$ is a graph consisting of $r\ge2$ odd cycles glued together at a common vertex. The edge of $G$ between $v_{j\,k-1}$ and $v_{jk}$ is sometimes denoted by $e_{jk}$; however, when the particular location of the edges are not important, we shall refer to them as $e_1,e_2,\ldots,e_{|E|}$.

Choosing an arbitrary orientation, we parametrize each edge with $x\in[0,1]$, and consider a Schr\"odinger operator with potential $q_{j}(x)\in L^1(0,1)$ on the edge $e_{j}$ and with Neumann (or Kirchhoff) boundary conditions (sometimes called standard matching conditions), i.e., solutions are required to be continuous at the vertices and, in the local coordinate pointing outward, the sum of derivatives is zero. 
More formally, consider the eigenvalue problem
\begin{align}\label{sch}
-y''+q_j(x)y=\lambda y
\end{align}
on $e_j$ for all $j$ with the conditions
\begin{align}\label{continuity}
y_j(\kappa_j)=y_k(\kappa_k)
\end{align}
if $e_j$ and $e_k$ are incident edges attached to a vertex $v$ where $\kappa=0$ for outgoing edges, $\kappa=1$ for incoming edges (and can be both $0$ or $1$ for loops); and in every vertex $v$
\begin{align}\label{Kirchhoff}
\sum_{e_j\textrm{ leaves }v} y_j'(0)=\sum_{e_j\textrm{ enters }v} y_j'(1)
\end{align}
(loops are counted on both sides).

\begin{thrm}\label{oddcycles}
Consider the eigenvalue problem (\ref{sch})-(\ref{Kirchhoff}). Let $G$ be a graph consisting of $r\ge2$ odd cycles glued together at a common vertex. If $\lambda=0$ is the smallest eigenvalue and for infinitely many $k\in\mathbb{Z}^+$ there are $r-1$ eigenvalues (counting multiplicities) such that $\lambda=(2k+1)^2\pi^2+o(1)$, then $q=0$ a.e. on $G$.
\end{thrm}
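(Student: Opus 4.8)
The plan is to reduce the theorem to the single assertion $\int_G q:=\sum_j\int_0^1 q_j(x)\,dx=0$, after which the classical Ambarzumian variational trick finishes. Let $\mathfrak h_q$ be the closed lower semibounded form $\mathfrak h_q[y]=\sum_j\int_0^1\bigl(|y_j'|^2+q_j|y_j|^2\bigr)$ with form domain the continuous functions in $\bigoplus_j H^1(e_j)$; since each $q_j\in L^1(0,1)$ is infinitesimally form bounded, $\mathfrak h_q$ generates the self-adjoint operator $H_q$ of (\ref{sch})--(\ref{Kirchhoff}), which has discrete spectrum. If $\lambda=0$ is the smallest eigenvalue, then testing the Rayleigh quotient against $y\equiv1$ gives $0\le\mathfrak h_q[1]/\|1\|^2=\int_G q/|E|$, so $\int_G q\ge0$; and once we also know $\int_G q\le0$, equality holds, so the infimum of the Rayleigh quotient is attained at $y\equiv1$, hence $y\equiv1$ is an eigenfunction for $\lambda=0$, which forces $-1''+q_j=0$, i.e. $q_j=0$ a.e. on every edge. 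So it suffices to deduce $\int_G q=0$ from the spectral hypothesis near $(2k+1)^2\pi^2$.

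Next I would analyse the unperturbed operator $H_0$ near $\lambda=(2k+1)^2\pi^2$. Putting $\lambda=z^2$ and $z=(2k+1)\pi$ one has $\sin z=0$, $\cos z=-1$, so continuity at the vertices forces the vertex values of any eigenfunction to change sign along each edge; running this around an odd cycle forces the value at the common vertex $v_0$ to vanish, hence all vertex values vanish and every edge $e_{i\ell}$ carries a pure sine $b_{i\ell}\sin(zx)$. The Kirchhoff conditions then give $b_{i\,\ell+1}=-b_{i\ell}$ around each cycle $C_i$ and, at $v_0$, the relation $\sum_{i=1}^r b_{i1}=0$ — here the contributions of the two edge-ends of $C_i$ at $v_0$ are $b_{i1}$ and $(-1)^{n_i-1}b_{i1}=b_{i1}$, and it is precisely the oddness of $n_i$ that makes them add rather than cancel. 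Hence $(2k+1)^2\pi^2$ is an eigenvalue of $H_0$ of multiplicity exactly $r-1$, with eigenspace $\{\psi_\beta=\sum_i\beta_i\psi_i:\sum_i\beta_i=0\}$, where $\psi_i$ is the ``sine around $C_i$''. Moreover, because $G$ contains an odd cycle it is non-bipartite, so $-1$ is not an eigenvalue of its normalized adjacency operator; consequently the remaining (``discrete'') eigenvalues of $H_0$ stay at distance $\Theta(k)$ from $(2k+1)^2\pi^2$, i.e. this $(r-1)$-fold eigenvalue sits in a spectral gap of length $\Theta(k)$.

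Now I would run (form) perturbation theory in that gap. Since the gap $\Theta(k)$ dwarfs the $O(1)$ shift caused by $q$, for all large $k$ the operator $H_q$ has exactly $r-1$ eigenvalues $\lambda^{(k)}_1,\dots,\lambda^{(k)}_{r-1}$ there, and $\lambda^{(k)}_j-(2k+1)^2\pi^2$ is the $j$-th eigenvalue of the compression of multiplication by $q$ to the unperturbed eigenspace, up to an error tending to $0$ as $k\to\infty$. That compression is block-diagonal over the cycles because the $\psi_i$ have disjoint supports, and $\int_0^1 q_{i\ell}\sin^2((2k+1)\pi x)\,dx\to\tfrac12\int_0^1 q_{i\ell}$ by the Riemann--Lebesgue lemma; it therefore converges to $P\,\mathrm{diag}(Q_i/n_i)\,P$ on $\{\sum_i\beta_i=0\}$, where $Q_i:=\int_{C_i}q$ and $P$ is the orthogonal projection onto $\{\sum_i\beta_i=0\}$ in the inner product $\langle\beta,\gamma\rangle=\sum_i n_i\beta_i\gamma_i$. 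Thus $\lambda^{(k)}_j-(2k+1)^2\pi^2$ converges as $k\to\infty$ to the $j$-th eigenvalue $\nu_j$ of this symmetric matrix; the hypothesis that these differences are $o(1)$ for infinitely many $k$ forces $\nu_1=\cdots=\nu_{r-1}=0$, i.e. $P\,\mathrm{diag}(Q_i/n_i)\,P=0$. A short linear-algebra step now finishes: $P\,\mathrm{diag}(d_i)\,P=0$ means $\mathrm{diag}(d_i)$ carries $\{\sum\beta_i=0\}$ into its one-dimensional orthogonal complement $\mathrm{span}\{(1/n_1,\dots,1/n_r)\}$; applying this to the vector with $1$ in slot $i$, $-1$ in slot $j$ and $0$ elsewhere gives $d_in_i+d_jn_j=0$ when $r=2$ (so $Q_1+Q_2=\int_G q=0$) and, when $r\ge3$, forces $d_i=d_j=0$ for all $i\ne j$, whence $Q_i=0$ for every $i$ and again $\int_G q=0$. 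Combined with the first paragraph this gives $q=0$ a.e.

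The main obstacle is the perturbation step: one must justify ``$\lambda^{(k)}_j=(2k+1)^2\pi^2+\nu_j+o(1)$'' for potentials that are merely $L^1$, where $q$ cannot be treated as a bounded operator. This is best handled either by form perturbation theory, controlling the second-order contribution by the gap $\Theta(k)$, or — probably the route matching the spectral-determinant viewpoint of the paper — by tracking the zeros of the explicit secular determinant of $G$ near $(2k+1)^2\pi^2$ using the $L^1$-asymptotics $z\,s_j(1,z^2)=\sin z-\tfrac{1}{2z}\cos z\int_0^1 q_j+o(1/z)$ and $c_j(1,z^2),\,s_j'(1,z^2)=\cos z+\tfrac{1}{2z}\sin z\int_0^1 q_j+o(1/z)$ of the edge monodromy data. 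Everything else — the unperturbed spectral count, the Riemann--Lebesgue computation of the compression, and the final linear algebra — is routine.
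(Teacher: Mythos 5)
Your proposal is correct in outline, but it takes a genuinely different route through the core of the argument. The paper never touches the eigenfunctions of the unperturbed operator directly: it computes the spectral determinant $\det M$ as a sum over odd saturated forests (Theorem \ref{direct}), specializes to the glued-cycles graph to obtain $\det M\sim c\,\lambda^{-(r-1)/2}p(d)$ with $p(d)=\sum_i\prod_{j\ne i}(n_jd-Q_j)$ as in (\ref{eigpol}), counts the total multiplicity near $(2k+1)^2\pi^2$ by a periodicity-plus-eigenvalue-stability argument (Lemma \ref{multiplicity}), and concludes that $p$ has only the root $d=0$ (Lemma \ref{pd}). You instead diagonalize the unperturbed problem explicitly --- your $(r-1)$-dimensional space of ``sines around the cycles'' with $\sum_i\beta_i=0$ is exactly the nullspace the paper counts abstractly --- and run degenerate perturbation theory in the $\Theta(k)$ gap. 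Your condition $P\,\mathrm{diag}(Q_i/n_i)\,P=0$ is equivalent to the paper's conclusion that all roots of $p$ vanish, since $p$ is, up to normalization, the characteristic polynomial of that compression in the weighted inner product; and your linear-algebra finish is arguably cleaner than the paper's ordering argument with $h_1\ge\cdots\ge h_r$. What the paper's route buys is that the one step you rightly flag as delicate --- justifying first-order degenerate perturbation theory for merely $L^1$ potentials and controlling the second-order contribution by the gap --- is replaced by uniform asymptotics of the entire function $\det M$, which are standard for $L^1$ potentials; indeed the second remedy you propose for your ``main obstacle'' is precisely the paper's method, so if you carry out that remedy you essentially reproduce Sections 3--4 of the paper. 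The two endpoints (reduction of the theorem to $\int_Gq=0$, and the final Rayleigh-quotient step showing the constant must be the ground state, whence $q=0$) coincide with the paper's.
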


If the lengths of the odd cycles are all $1$, i.e., the cycles are all loops, then the statement reduces to that of Theorem 2.1 in \cite{Yang20161348}, which states the following:

Suppose $G$ is a flower-like graph, i.e., a single vertex attached $r$ loops of length $1$. For $k=1,2,\ldots$, let $m_k$ be a sequence of integers with $\lim m_k=+\infty$. If eigenvalues are nonnegative, $\lambda_k=(2m_k+1)^2\pi^2$ are eigenvalues with multiplicities $(r-1)$, where $m_k$ is a strictly ascending infinite sequence of positive integers, then $q_j(x)=0$ a.e. on $[0,1]$, for each $j=1,2,\ldots,r$. We have to require $r\ge2$ for the consequence to hold.

\end{section}

\begin{section}{Calculation of the spectral determinant}

Denote by $c_j(x,\lambda)$ the solution of (\ref{sch}) which satisfies the conditions
$c_j(0,\lambda)-1 =c_j'(0,\lambda) = 0$ and by $s_j(x,\lambda)$ the solution of (\ref{sch}) which satisfies the conditions $s_j(0,\lambda) =s_j'(0,\lambda)-1 = 0$. %
Each $y_j(x,\lambda)$ may be written as a linear combination
\begin{align}
y_j(x,\lambda)=A_j(\lambda)c_j(x,\lambda)+\sqrt{\lambda}B_j(\lambda)s_j(x,\lambda).
\end{align}
Then $y_j(0,\lambda)=A_j(\lambda)$ is the same on each outgoing edge; hence as in \cite{Kiss2016Ambarzumian}, we index the functions $A(\lambda)$ by vertices, and then
\begin{align}%
y_j(x,\lambda)=A_v(\lambda)c_j(x,\lambda)+\sqrt{\lambda}B_j(\lambda)s_j(x,\lambda),
\end{align}
if $e_j$ starts from $v$. If the eigefunctions are normalized, i.e., $\sum_j\|y_j(x,\lambda)\|_2^2=1$, then $A_v(\lambda)=B_j(\lambda)=O(1)$ (\cite{CarlsonPivovarchik2007ambarzumian,Yang20161348,Kiss2016Ambarzumian}). The coefficients $A_v$ and $B_j$ form a $(|V|+|E|)$-dimensional vector, which satisfies $|V|$ Kirchhoff conditions at the vertices and $|E|$ continuity conditions at the incoming ends of edges, namely, for all $v\in V(G)$,
\begin{align}%
\sum_{e_j:\ldots\to v} \frac{1}{\sqrt{\lambda}}A_{v_j}(\lambda)c'_j(1,\lambda)+B_j(\lambda)s_j'(1,\lambda)-\sum_{e_j:v\to\ldots}B_j(\lambda)=0,
\end{align}
where in the first sum $v_j$ denotes the starting point of $e_j$; and for all $e_j\in E(G)$,
\begin{align}%
A_u(\lambda)c_j(1,\lambda)+\sqrt{\lambda}B_j(\lambda)s_j(1,\lambda)-A_v(\lambda)&=0,
\end{align}
if $e_j$ points from $u$ to $v$ (see eq. (2.3) and (2.4) in \cite{Kiss2016Ambarzumian}).

The matrix $M$ of this homogeneous linear system of equations has a special structure, the description of which we repeat from \cite{Kiss2016Ambarzumian}. Namely, $M=\left[\begin{array}{cc}A&B\\C&D\end{array}\right]$, where
\begin{itemize}
\item $A$ is like an adjacency matrix; $a_{vu}=\frac{1}{\sqrt{\lambda}}\sum c_j'(1,\lambda)$, the sum is taken on edges pointing from $u$ to $v$;
\item $B$ and $C$ are like incidence matrices;\\
$b_{vj}=\left\{\begin{array}{ccc}s_j'(1,\lambda)&\textrm{ if $e_j$ ends in $v$}\\-1&\textrm{ if $e_j$ starts from $v$}\\s_j'(1,\lambda)-1&\textrm{ if $e_j$ is a loop in $v$}\\0&\textrm{otherwise}\end{array}\right.$\\
$c_{jv}=\left\{\begin{array}{ccc}-1&\textrm{ if $e_j$ ends in $v$}\\c_j(1,\lambda)&\textrm{ if $e_j$ starts from $v$}\\-1+c_j(1,\lambda)&\textrm{ if $e_j$ is a loop in $v$}\\0&\textrm{otherwise}\end{array}\right.$
\item $D$ is a diagonal matrix, $d_{jj}=\sqrt{\lambda}s_j(1,\lambda)$.
\end{itemize}

The determinant of the matrix $M$ is the so-called spectral determinant of the problem (\ref{sch})-(\ref{Kirchhoff}).

\noindent{\bf Example 1. }
Consider a flower-like graph, i.e., a single vertex with $r$ loops. Then
\begin{equation*}
M=M_1=\left[\begin{array}{c|ccc}
  \frac{1}{\sqrt{\lambda}}\sum_{k=1}^{r}c_{k}'(1,\lambda)&s_{1}'(1,\lambda)-1&\ldots&s_{r}'(1,\lambda)-1\\\hline
  -1+c_{1}(1,\lambda)&\sqrt{\lambda}s_{1}(1,\lambda)&\ldots&0\\
  \vdots&\vdots&\ddots&\vdots\\
  -1+c_{r}(1,\lambda)&0&\ldots&\sqrt{\lambda}s_{r}(1,\lambda)
\end{array}\right].
\end{equation*}
with determinant
\begin{align*}
\det M_1&={\lambda}^{\frac{r-1}{2}}\left(\sum_{k=1}^{r}c_{k}'(1,\lambda)\prod_{j=1}^{r}s_{j}(1,\lambda)\right.\\
&\left.-\sum_{k=1}^{r}(s_{k}'(1,\lambda)-1)(-1+c_{k}(1,\lambda))\prod_{j\ne k}s_{j}(1,\lambda)\right)
\end{align*}
corresponding to formula (2.9) in \cite{Yang20161348}.

The elements of $M$ have the following asymptotics for $\lambda=(2k+1)^2\pi^2+d+o(1)$ (see \cite{CarlsonPivovarchik2007ambarzumian} eq. (2.3) or \cite{LawYanagida2012} Lemma 3.1):
\begin{align}
\frac{1}{\sqrt{\lambda}}c_j'(1,\lambda)&=\frac{1}{2\sqrt{\lambda}}(d-\int_0^{1}q_j)+o(\frac{1}{\sqrt{\lambda}}),\label{asym1}\\
s_j'(1,\lambda)&=-1+o(\frac{1}{\sqrt{\lambda}}),\label{asym2}\\
c_j(1,\lambda)&=-1+o(\frac{1}{\sqrt{\lambda}}),\label{asym3}\\
\sqrt{\lambda}s_j(1,\lambda)&=\frac{1}{2\sqrt{\lambda}}(\int_0^{1}q_j-d)+o(\frac{1}{\sqrt{\lambda}}).\label{asym4}
\end{align}

\rem
Using these asymptotics, we get
\begin{align}
\det M_1=-4\sum_{k=1}^{r}\prod_{j\ne k}\sqrt{\lambda}s_{j}(1,\lambda)+o(\lambda^{-\frac{r}{2}}).
\end{align}
This is a special case of (\ref{detM}) and of (\ref{detMforspecialG}) below.

\end{section}
\begin{section}{The proof}

\begin{lm}
The determinant of $M$ for $\lambda=(2k+1)^2\pi^2+O(1)$ is $O(\lambda^{-\frac{1}{2}(|E|-|V|)})$.
\end{lm}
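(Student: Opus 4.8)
The plan is to read off the order of magnitude of each block of $M$ from the asymptotic formulas (\ref{asym1})--(\ref{asym4}) and then, in the Leibniz expansion of $\det M$, count how many ``small'' factors of order $\lambda^{-1/2}$ every surviving term must contain.

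First I would note that (\ref{asym1})--(\ref{asym4}) hold uniformly for $\lambda=(2k+1)^2\pi^2+O(1)$: from $\sqrt{\lambda}=(2k+1)\pi+O(\lambda^{-1/2})$ one gets $\cos\sqrt{\lambda}=-1+O(\lambda^{-1/2})$ and $\sin\sqrt{\lambda}=O(\lambda^{-1/2})$, which is all that enters those estimates. Consequently every entry of $A$ equals $\tfrac{1}{\sqrt{\lambda}}\sum c_j'(1,\lambda)=O(\lambda^{-1/2})$; every (diagonal) entry of $D$ equals $\sqrt{\lambda}\,s_j(1,\lambda)=O(\lambda^{-1/2})$; and every entry of $B$ and of $C$ is $O(1)$ --- indeed the entries of $B$ are among $s_j'(1,\lambda)$, $-1$, $s_j'(1,\lambda)-1$ and those of $C$ among $-1$, $c_j(1,\lambda)$, $c_j(1,\lambda)-1$, all bounded.

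Next I would expand $\det M=\sum_\sigma \sgn(\sigma)\prod_i m_{i\sigma(i)}$ over permutations $\sigma$ of $\{1,\dots,|V|+|E|\}$, with the first $|V|$ indices labelling the rows running through $A$ and $B$, and the last $|E|$ the rows running through $C$ and $D$. Fix $\sigma$ and let $q$ be the number of $i\le|V|$ with $\sigma(i)>|V|$, i.e.\ the number of factors coming from $B$; then $|V|-q$ factors come from $A$, and counting the columns in $\{1,\dots,|V|\}$ not yet used forces exactly $q$ factors from $C$, while counting the remaining rows forces exactly $|E|-q$ factors from $D$. Hence the term has size
\[
O(\lambda^{-1/2})^{|V|-q}\cdot O(1)^{q}\cdot O(1)^{q}\cdot O(\lambda^{-1/2})^{|E|-q}=O\!\big(\lambda^{-\frac12(|E|-|V|)-(|V|-q)}\big),
\]
and since $0\le q\le|V|$ this is $O(\lambda^{-\frac12(|E|-|V|)})$. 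Summing the $(|V|+|E|)!$ terms gives the claim.

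I expect the only delicate points to be the combinatorial bookkeeping in the last step --- verifying that the numbers of factors drawn from $C$ and from $D$ are indeed forced to be $q$ and $|E|-q$ --- and the remark that the asymptotics hold uniformly over the whole $O(1)$ window in $\lambda$, not just along a sequence with a fixed shift $d$. A Schur-complement approach via $\det M=\det D\cdot\det(A-BD^{-1}C)$ would yield the same bound, but is less convenient because the diagonal of $D$ can vanish to leading order (when the shift equals $\int_0^1 q_j$), so $D$ need not be invertible; the Leibniz count avoids this entirely.
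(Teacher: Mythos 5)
Your argument is correct and is essentially the paper's own proof, just written out in more detail: the paper also bounds the Leibniz/Laplace expansion by noting that at most $|V|$ factors can come from $B$ (hence at most $|V|$ from $C$), forcing at least $|E|-|V|$ factors of size $O(\lambda^{-1/2})$ from $A$ and $D$. Your explicit bookkeeping of the counts $q$, $|V|-q$, $|E|-q$ and the remark on uniformity of the asymptotics over the $O(1)$ window are welcome elaborations, not deviations.
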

\begin{proof}
Look at the terms in the Laplace expansion.
Taking at most $|V|$ factors from $B$ (and consequently from $C$) we have to take at least $(|E|-|V|)$ factors of magnitude $O(\frac{1}{\sqrt{\lambda}})$ from $A$ and $D$.
\end{proof}

We shall use the following terminology: a graph is a \emph{saturated forest} if every component has exactly one cycle. The saturated forest is \emph{odd}, if it does not contain even cycles.
Note that in a saturated forest, the number of edges and vertices are equal.

\begin{lm}\label{satforest}
The determinant of the (unoriented) incidence matrix of an odd saturated forest is $\pm 2^{\kappa}$ where $\kappa$ denotes the number of components.
\end{lm}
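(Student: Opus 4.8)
The plan is to compute the determinant by exploiting the block structure that the incidence matrix of a saturated forest inherits: since vertices and edges are in bijection within each component, the matrix is block-diagonal (after reordering rows and columns component-wise), so $\det$ factors as the product of the incidence determinants of the individual components. Hence it suffices to treat a single connected odd saturated forest, i.e.\ a connected graph with exactly one cycle, that cycle being odd, and to show its (unoriented) incidence determinant is $\pm 2$; the general claim then follows by multiplicativity, giving $\pm 2^{\kappa}$.

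For a single such component, I would first handle the pendant structure. A connected graph with one cycle is the cycle $C$ with trees hanging off its vertices. I would induct on the number of edges: if the graph has a leaf $v$ with its unique incident edge $e$, then the row of $v$ in the incidence matrix has a single nonzero entry (a $1$) in column $e$; expanding the determinant along that row reduces to the incidence matrix of the graph with $v$ and $e$ deleted, which is again a connected odd saturated forest with one fewer edge, and the determinant is unchanged up to sign. Repeating, I reduce to the case where the graph is exactly an odd cycle $C_{2m+1}$.

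It then remains to compute $\det$ of the $(2m{+}1)\times(2m{+}1)$ unoriented incidence matrix of an odd cycle. Ordering the vertices and edges cyclically, this is the circulant-type matrix with $1$'s on the diagonal and $1$'s on the cyclic subdiagonal; its determinant is $1-(-1)^{2m+1}=2$ when we expand (or, equivalently, $\det = 1 + (-1)^{L+1}$ for a cycle of length $L$, which equals $2$ precisely when $L$ is odd). This is where the oddness hypothesis is essential: an even cycle gives determinant $0$, which is why the lemma restricts to odd saturated forests. Collecting the signs picked up in the leaf-removal steps and the product over the $\kappa$ components yields $\pm 2^{\kappa}$.

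The main obstacle is bookkeeping rather than conceptual: one must be careful that deleting a leaf vertex together with its edge genuinely preserves the ``saturated'' and ``odd'' properties (it does, since no cycle is touched), and that when the process terminates one is left with a disjoint union of odd cycles with no remaining tree edges — the induction must be set up on each component so that the base case is precisely a single odd cycle. Tracking the sign through the Laplace expansions is harmless since only the magnitude $2^{\kappa}$ is asserted, but one should at least note that each expansion along a singleton row contributes a sign $\pm 1$ and nothing else.
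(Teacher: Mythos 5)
Your argument is correct and follows essentially the same route as the paper's own proof: reduce to connected components by block-diagonality, strip pendant vertices by expanding along their singleton rows, and finish with the explicit determinant of an odd cycle's incidence matrix. One small point worth making explicit: since the paper allows loops ($n_j=1$), the base case also includes a single vertex with a loop, whose unoriented incidence matrix is the $1\times 1$ matrix $(2)$ (the paper's convention assigns the entry $2$ to a loop), which is consistent with, but not literally an instance of, your circulant computation.
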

\begin{proof}
The incidence matrix (see in the Glossary) is a direct sum of that of the components, hence it is enough to prove the statement for connected graphs. It is true for odd cycles as well as for a single vertex with a loop. If the graph is not a single cycle or a single loop, there is at least one vertex with only one incident edge. Leaving this vertex out from the graph does not change the absolute value of the determinant of the incidence matrix. This can be repeated until we reach a single cycle or a single loop.
\end{proof}
\rem
If a graph contains even cycles, the determinant of its incidence matrix is $0$. 

\begin{lm}\label{incidence}
If $\lambda=(2k+1)^2\pi^2+O(1)$, then the determinant of a $|V|\times |V|$ submatrix of $C$ (and of $B$) is $\pm 2^{\kappa}+o(\frac{1}{\sqrt{\lambda}})$ if the rows in $C$ (columns in $B$) corresponds to the edges of an odd saturated forest, and $o(\frac{1}{\sqrt{\lambda}})$ otherwise.
\end{lm}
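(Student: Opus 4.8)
The plan is to feed the asymptotics (\ref{asym2}) and (\ref{asym3}) into the entries of $C$, the case of $B$ being symmetric. Fix the $|V|$ rows of $C$ indexed by edges $e_{j_1},\dots,e_{j_{|V|}}$, and let $H$ be the subgraph of $G$ on the full vertex set $V(G)$ whose edge set is $\{e_{j_1},\dots,e_{j_{|V|}}\}$; then $H$ has $|V|$ vertices and $|V|$ edges. Reading off the definition of $c_{jv}$ and using (\ref{asym3}), every entry of the selected $|V|\times|V|$ submatrix is $-1+o(\frac{1}{\sqrt{\lambda}})$ for an incident non-loop pair, $-2+o(\frac{1}{\sqrt{\lambda}})$ for a loop, and $0$ otherwise; hence the submatrix equals $-N+o(\frac{1}{\sqrt{\lambda}})$ entrywise, where $N$ is the unoriented incidence matrix of $H$ (a loop contributing the entry $2$, as in Lemma~\ref{satforest}). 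Since all entries are $O(1)$ and the order $|V|$ is a fixed constant, expanding by the Leibniz formula yields $\det(\text{submatrix of }C)=(-1)^{|V|}\det N+o(\frac{1}{\sqrt{\lambda}})$.

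Next I would evaluate $\det N$ purely combinatorially. The matrix $N$ is block diagonal along the connected components of $H$: the block belonging to a component with $a$ vertices and $b$ edges is a $b\times a$ rectangle, and in the Leibniz expansion every surviving permutation must send each edge to a vertex of its own component, which is impossible unless $a=b$ for every component. Since $H$ has as many edges as vertices, $\det N\ne 0$ therefore forces each component to carry exactly one cycle, i.e. $H$ is a saturated forest. If some cycle of $H$ is even, $\det N=0$ by the Remark after Lemma~\ref{satforest}; otherwise $H$ is an odd saturated forest and Lemma~\ref{satforest} gives $\det N=\pm 2^{\kappa}$, with $\kappa$ the number of components. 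Combining this with the previous identity, $\det(\text{submatrix of }C)=\pm 2^{\kappa}+o(\frac{1}{\sqrt{\lambda}})$ when the chosen rows span an odd saturated forest and $\det(\text{submatrix of }C)=o(\frac{1}{\sqrt{\lambda}})$ in every other case.

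For $B$ the argument is identical after transposition: selecting $|V|$ columns of $B$ and using (\ref{asym2}) shows the submatrix equals $-N^{\top}+o(\frac{1}{\sqrt{\lambda}})$, where $N$ is the incidence matrix of the subgraph spanned by those edges, and $\det(-N^{\top})=(-1)^{|V|}\det N$ produces the same dichotomy. I do not expect a genuine obstacle here; the one point that wants care is the vanishing of $\det N$ when $H$ is not a saturated forest — concretely, the observation that a graph with equally many vertices and edges that is not a saturated forest must have a component with strictly fewer edges than vertices, so that the component-block structure renders $N$ singular — together with the routine bookkeeping that keeps the $o(\frac{1}{\sqrt{\lambda}})$ terms in the Leibniz expansion from accumulating, which they cannot since $|V|$ is fixed.
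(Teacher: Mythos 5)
Your proof is correct and follows essentially the same route as the paper's: discard the $o(\frac{1}{\sqrt{\lambda}})$ terms, recognize the remaining matrix as the negative of the unoriented incidence matrix of the subgraph spanned by the chosen edges, and invoke Lemma~\ref{satforest} together with the remark on even cycles. The only (harmless) difference is in the vanishing case: the paper exhibits explicit linear dependencies among the rows (for an even cycle, and for two cycles in one component joined by a path), whereas you deduce singularity from the component-block structure of the incidence matrix and the edge/vertex count per component; both arguments are valid.
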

\begin{proof}
Leaving out the $o(\frac{1}{\sqrt{\lambda}})$ terms from the submatrix we make only $o(\frac{1}{\sqrt{\lambda}})$ error in its determinant. What we get is the negative of an incidence matrix of a subgraph with $|V|$ vertices and $|V|$ edges. If the subgraph contains an even cycle, then the corresponding rows are dependent. If a component of the subgraph contains more than one odd cycles or loops (and no even cycles), then consider a path between the two; the corresponding rows are also dependent. The number of cycles is equal to the number of components hence if the determinant is not $o(\frac{1}{\sqrt{\lambda}})$, the rows in $C$ must correspond to the edges of an odd saturated forest. The proof for $B$ is similar.
\end{proof}

\begin{thrm}\label{direct}
If $\lambda=(2k+1)^2\pi^2+O(1)$, then
\begin{align}\label{detM}
\det M=(-1)^{|V|}\sum_{\tau}4^{\kappa(\tau)}\prod_{e_j\notin\tau}\sqrt{\lambda}s_j(1,\lambda)+o(\lambda^{-\frac{|E|-|V|+1}{2}}).
\end{align}
where the sum is taken for all odd saturated forest $\tau$ of $G$.
\end{thrm}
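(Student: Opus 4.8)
The plan is to apply the Laplace (generalized cofactor) expansion to $\det M$ along the first $|V|$ rows, which split $M$ as $\left[\begin{array}{cc}A&B\\C&D\end{array}\right]$. Each term of the expansion picks a set $S$ of $|V|$ columns: from the first block row we take the $|V|\times|V|$ minor of $[A\,|\,B]$ on columns $S$, and from the last $|E|$ rows we take the complementary $|E|\times|E|$ minor of $[C\,|\,D]$ on the columns outside $S$. Since $D$ is diagonal with entries $\sqrt{\lambda}s_j(1,\lambda)=O(\lambda^{-1/2})$ and the entries of $A$ are also $O(\lambda^{-1/2})$, while the entries of $B$ and $C$ are $O(1)$, I will first argue (as in the first Lemma of the section) that the dominant contribution of order $\lambda^{-(|E|-|V|)/2}$ comes only from terms where we take exactly $|V|$ columns from the $B$-part in the top block — equivalently, no columns from $A$ — so that the complementary minor in the bottom block is a $|V|\times|V|$ minor of $C$ (on the edges selected in $S$) times the product of the diagonal $D$-entries over the edges $e_j\notin S$. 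Any term that dips into $A$ costs an extra factor $O(\lambda^{-1/2})$, hence lands in the error term $o(\lambda^{-(|E|-|V|+1)/2})$; this is where I would be careful to check the bookkeeping of how many $\lambda^{-1/2}$ factors each term carries.

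Next, for the surviving terms, the top-block minor is a $|V|\times|V|$ minor of $B$ on a column set corresponding to a set $T$ of $|V|$ edges, and the bottom-block minor is the $|V|\times|V|$ minor of $C$ on those same edges $T$ (the remaining rows of the bottom block being exactly the edges of $T$), multiplied by $\prod_{e_j\notin T}\sqrt{\lambda}s_j(1,\lambda)$. By Lemma \ref{incidence}, each of these two minors equals $\pm 2^{\kappa}+o(\lambda^{-1/2})$ when $T$ is the edge set of an odd saturated forest $\tau$ (with $\kappa=\kappa(\tau)$ components), and $o(\lambda^{-1/2})$ otherwise. Since $\prod_{e_j\notin\tau}\sqrt{\lambda}s_j(1,\lambda)=O(\lambda^{-(|E|-|V|)/2})$, only the odd-saturated-forest terms contribute to the stated order, each yielding a factor $(\pm 2^{\kappa})\cdot(\pm 2^{\kappa})=4^{\kappa(\tau)}$ once the signs are matched; the $o(\lambda^{-1/2})$ corrections in either minor, multiplied by the $O(\lambda^{-(|E|-|V|)/2})$ product, fall into the error term.

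It then remains to verify that all the signs (the $(-1)^{\sigma}$ from the Laplace expansion, together with the two $\pm$ signs from the $B$- and $C$-minors of a given $\tau$) combine into the single global sign $(-1)^{|V|}$ independent of $\tau$. I would handle this by relating the column permutation sign in the Laplace expansion to the row/column labelling and using that $B$ and $C$ are essentially negative transposes of the same (unoriented) incidence structure — the $-1$ entries of $B$ sit where the $-1$ entries of $C^{\mathsf{T}}$ sit, up to the $s_j'(1,\lambda)\approx -1$ and $c_j(1,\lambda)\approx -1$ asymptotics — so that for each $\tau$ the product of the two minor signs is the square of a single sign times a factor accounting for the $|V|$ diagonal-type entries, giving a $\tau$-independent result; pulling this through yields the overall $(-1)^{|V|}$.

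I expect the sign computation to be the main obstacle: the Laplace-expansion sign, the orientation conventions buried in the definitions of $B$ and $C$, and the $\pm 2^{\kappa}$ ambiguity in Lemma \ref{satforest} must all be reconciled to show the answer is uniformly $(-1)^{|V|}4^{\kappa(\tau)}$ rather than carrying a $\tau$-dependent sign. A clean way to sidestep much of this is to check the formula against Example 1 (the flower-like graph, where $|V|=1$, $|E|=r$, and the only odd saturated forests are single loops, $\kappa=1$), which already reproduces $\det M_1=-4\sum_k\prod_{j\neq k}\sqrt{\lambda}s_j(1,\lambda)+o(\lambda^{-r/2})$ — matching $(-1)^1 4^1$ — and to note that both $B$ and $C$ are block-structured over the $C_j$'s in a way that localizes the sign analysis to individual components, where it reduces to the connected case already implicitly handled in Lemma \ref{satforest}.
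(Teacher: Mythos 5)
Your proposal is correct and follows essentially the same route as the paper: a Laplace expansion in which the dominant terms take exactly $|E|-|V|$ diagonal entries from $D$, leaving a minor whose $B$- and $C$-blocks are (up to $o(\lambda^{-1/2})$) negatives of the incidence matrix of the spanned subgraph $\tau$, so that Lemma \ref{incidence} gives $4^{\kappa(\tau)}$ as a square and the $\pm$ ambiguity disappears, with the antidiagonal block structure supplying the uniform $(-1)^{|V|}$. The only point to tighten is your remark that dipping into $A$ ``costs an extra factor $O(\lambda^{-1/2})$'': each column taken from $A$ in fact forces one additional column from $D$ as well, so the cost is $O(\lambda^{-1})$ per such column, which is what actually places those terms inside $o(\lambda^{-\frac{|E|-|V|+1}{2}})$.
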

\begin{proof}
The main terms in the Laplace expansion are those which contain exactly $(|E|-|V|)$ elements from $D$. The product of a fixed set of $(|E|-|V|)$ elements in $D$ is weighted by the determinant of the respective minor, with all other elements of $D$ substituted by zero. The remaining rows in $C$ and columns in $B$ look like an unordered incidence matrix of the graph $\tau$ spanned by the remaining $|V|$ edges. Then the determinant of the minor is $(-1)^{|V|}$ times the square of the determinant of the incidence matrix of $\tau$. %
\end{proof}

\begin{lm}\label{multiplicity}
The total multiplicities of the eigenvalues $\lambda=(2k+1)^2\pi^2+O(1)$ are exactly $|E|-|V|$.
\end{lm}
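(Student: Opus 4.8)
The plan is to extract the number of eigenvalues clustered near $(2k+1)^2\pi^2$ from the asymptotics of $\det M$ supplied by Theorem~\ref{direct}, and then to identify that number with the sum of the eigenvalue multiplicities. I would begin by normalising: put $\lambda=(2k+1)^2\pi^2+d$ and multiply (\ref{detM}) by $(\sqrt\lambda)^{|E|-|V|}$. The remainder $o(\lambda^{-(|E|-|V|+1)/2})$ then becomes $o(\lambda^{-1/2})=o(1)$, while in each product over $e_j\notin\tau$ there are exactly $|E|-|V|$ factors $\sqrt\lambda\,s_j(1,\lambda)$ — a saturated forest $\tau$ has $|V|$ edges — each of which absorbs one power of $\sqrt\lambda$ and, by (\ref{asym4}), tends to $\tfrac12\!\left(\int_0^1 q_j-d\right)$. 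Since (\ref{asym1})--(\ref{asym4}) hold uniformly on compact subsets of the complex $d$-plane, one obtains, as $k\to\infty$,
\[
(\sqrt\lambda)^{|E|-|V|}\det M\ \longrightarrow\ P(d):=(-1)^{|V|}\sum_{\tau}4^{\kappa(\tau)}\prod_{e_j\notin\tau}\tfrac12\!\left(\int_0^1 q_j-d\right),
\]
the sum running over the odd saturated forests $\tau$ of $G$. Each $\tau$ omits $|E|-|V|$ edges, so $P$ is a polynomial in $d$ of degree at most $|E|-|V|$, with leading coefficient $(-1)^{|V|}(-\tfrac12)^{|E|-|V|}\sum_{\tau}4^{\kappa(\tau)}$. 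As $G$ does admit an odd saturated forest — keep one of the $r$ cycles intact and delete one edge from each of the other $r-1$ cycles — this sum is strictly positive, whence $\deg P=|E|-|V|$ exactly.

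Next I would invoke Hurwitz's theorem. Fixing $R$ so large that all $|E|-|V|$ roots of $P$ lie in $\{|d|<R\}$ and $P$ is zero-free on $\{|d|=R\}$, the uniform convergence forces $\det M$, for all large $k$, to have exactly $|E|-|V|$ zeros (with multiplicity) in $\{|d|<R\}$ and none in $\{R\le|d|\le R'\}$ for any $R'$. Hence every eigenvalue with $\lambda-(2k+1)^2\pi^2=O(1)$ sits in $\{|d|<R\}$, and there the total order of the zeros of the spectral determinant is $|E|-|V|$. Since for a fixed eigenvalue $\lambda_0$ the multiplicity equals $\dim\ker M(\lambda_0)$ (geometric $=$ algebraic, by self-adjointness), which cannot exceed the order of $\lambda_0$ as a zero of $\det M$, the total multiplicity is at most $|E|-|V|$.

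The reverse inequality is where I expect the real work. The delicate point is that at $\lambda_0=(2k+1)^2\pi^2$ one may have $s_j(1,\lambda_0)=0$ for every edge simultaneously, which is exactly the configuration in which the order of a zero of $\det M$ might a priori strictly exceed $\dim\ker M(\lambda_0)$. I see two ways to close the gap. One is to show the zero order always equals $\dim\ker M(\lambda_0)$, via the standard variational identity that makes $\langle M'(\lambda_0)\psi,\psi\rangle$ proportional to $\|u\|_2^2>0$ for an eigenfunction $u$ with coefficient vector $\psi\in\ker M(\lambda_0)$, so that the relevant branches of the eigenvalues of $M(\lambda)$ pass through $0$ transversally, as in \cite{Kiss2016Ambarzumian}. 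The other, which avoids this point altogether, is a homotopy $q\rightsquigarrow tq$, $t\in[0,1]$: for $t=0$ one checks directly that $(2k+1)^2\pi^2$ has multiplicity exactly $|E|-|V|$ (write an $r$-parameter family of functions vanishing at all vertices, cut down to dimension $r-1=|E|-|V|$ by the Kirchhoff condition at the common vertex) and that, for large $k$, no other eigenvalue lies within distance $R$; meanwhile the Hurwitz count above is $|E|-|V|$ uniformly in $t$, the polynomials $P_t$ keep their roots in a fixed bounded set, and no eigenvalue of $-y''+tq\,y$ crosses $\{|d|=R\}$ as $t$ varies, so the total multiplicity in $\{|d|<R\}$ is constant along the homotopy and therefore still $|E|-|V|$ at $t=1$.
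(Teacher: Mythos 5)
Your proposal is essentially correct, but only the second of your two closing options actually closes the argument, and that option is, in substance, the paper's entire proof. The paper proceeds directly: for $q=0$ the spectral determinant is a polynomial in $\cos\sqrt\lambda$ and $\sin\sqrt\lambda$, so its zeros are $2\pi$-periodic in $\sqrt\lambda$ and the only ones in the window are $\sqrt\lambda=(2k+1)\pi$ for large $k$; at these points $A$ and $D$ vanish identically, $B$ and $C$ reduce (up to sign) to the unoriented incidence matrix, whose rank is $|V|$ for a connected non-bipartite graph, so $\dim\ker M=|E|-|V|$ exactly; the bound $|\lambda_n(q)-\lambda_n(0)|=O(\|q\|_1)$ then transports the count to arbitrary $q\in L^1$. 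Your Hurwitz computation on the normalised determinant is a valid, genuinely different route to the \emph{upper} bound: the limit polynomial $P$ does have degree exactly $|E|-|V|$ (odd saturated forests exist and all weights $4^{\kappa(\tau)}$ are positive), and the order of a zero of $\det M$ dominates $\dim\ker M(\lambda_0)$ by the local Smith form — though you should note that (\ref{asym1})--(\ref{asym4}) must be checked to hold uniformly for $d$ in complex compacta, which the paper never states. You correctly identify the lower bound as the delicate point, but your option (a) is asserted rather than proved: $M$ is not of the form $M_0-\lambda I$, and the claimed identity $\langle M'(\lambda_0)\psi,\psi\rangle\propto\|u\|_2^2$ would have to be verified for this particular $M$ before you may conclude that the order of each zero equals the nullity; as written this is a gap. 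Option (b) does work, and its two ingredients — the explicit eigenspace of dimension $r-1=|E|-|V|$ at $q=0$ together with continuity of the eigenvalues along $q\rightsquigarrow tq$, which prevents any branch from crossing $\{|d|=R\}$ — give both bounds at once, exactly as in the paper; once you have them, the Hurwitz count becomes redundant, serving only as an independent confirmation of the upper bound.
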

\begin{proof}
If $q=0$, $\det M$ is a polynomial of $\cos{\sqrt{\lambda}}$ and $\sin{\sqrt{\lambda}}$, hence its zeros are $2\pi$-periodic in $\sqrt{\lambda}$. ${\lambda}=(2k+1)^2\pi^2+O(1)$ with periodicity implies $\sqrt{\lambda}=(2k+1)\pi$ with finitely many exceptions. For $\lambda=(2k+1)^2\pi^2$ $A$ and $D$ are zero matrices, thus the rank of $M$ is $2|V|$, and its nullspace is exactly $(|E|-|V|)$-dimensional. As $|\lambda_n(q)-\lambda_n(0)|=O(\|q\|_1)$ (\cite{Kiss2016Ambarzumian}), the total multiplicity of eigenvalues $\lambda=(2k+1)^2\pi^2+O(1)$ is the same for all $q\in L^1$.
\end{proof}

\begin{cor}
If $\lambda=(2k+1)^2\pi^2+O(1)$ and the graph $G$ consists of $r$ odd cycles glued together at a common vertex, then
\begin{align}\label{detMforspecialG}
\det M=-4\sum_{i=1}^{r}\prod_{j\ne i}\sum_{l=1}^{n_j}\sqrt{\lambda}s_{jl}(1,\lambda)+o(\lambda^{-\frac{r}{2}}).
\end{align}
\end{cor}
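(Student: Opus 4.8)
The plan is to specialize formula (\ref{detM}) of Theorem~\ref{direct} to the graph $G$ by listing its odd saturated forests. First I would record the elementary counts. The cycle $C_j$ contributes $n_j$ edges and $n_j$ vertices, and the only identification made is at the common vertex; hence $|E|=\sum_{j=1}^{r}n_j$ and $|V|=1+\sum_{j=1}^{r}(n_j-1)$, so $|E|-|V|=r-1$. Moreover each $n_j$ is odd, so $|V|$ is one plus a sum of even numbers, i.e.\ odd, and $(-1)^{|V|}=-1$. In particular the error exponent $-\frac{|E|-|V|+1}{2}$ in (\ref{detM}) equals $-\frac r2$, and the sign in (\ref{detMforspecialG}) is accounted for.

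Next I would identify every (spanning) odd saturated forest $\tau$ of $G$. The common vertex $v_0$ is a cut vertex separating the cycles, so the blocks of $G$ are precisely the $C_j$ (a loop being its own block) and the only simple cycles of $G$ are $C_1,\dots,C_r$, each passing through $v_0$. A saturated forest has exactly as many cycles as components, and any two distinct $C_i$'s lie in the component containing $v_0$; hence $\tau$ contains at most one cycle, so it is connected with $\kappa(\tau)=1$ and its unique cycle is some $C_i$. Then all $n_i$ edges of $C_i$ belong to $\tau$, while on each of the remaining cycles $C_j$ ($j\ne i$) the part of $\tau$ must be acyclic and must keep all vertices of $C_j$ connected (through $v_0$) to the rest; the only subgraph of the cycle $C_j$ with these properties is $C_j$ with exactly one edge removed. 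Conversely, keeping one cycle $C_i$ intact and deleting precisely one edge from each $C_j$ with $j\ne i$ produces a connected unicyclic subgraph whose cycle $C_i$ is odd, hence an odd saturated forest; it has $|E|-(r-1)=|V|$ edges, as required.

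With this description I would substitute into (\ref{detM}). For the forest $\tau$ obtained by keeping $C_i$ and deleting the edge $e_{jl_j}$ from $C_j$ ($j\ne i$), one has $\kappa(\tau)=1$ and the edges outside $\tau$ are exactly the $e_{jl_j}$, so $\prod_{e_j\notin\tau}\sqrt{\lambda}s_j(1,\lambda)=\prod_{j\ne i}\sqrt{\lambda}s_{jl_j}(1,\lambda)$. Summing over the independent choices $l_j\in\{1,\dots,n_j\}$ factorizes, and then summing over $i$ gives
\begin{align*}
\sum_{\tau}4^{\kappa(\tau)}\prod_{e_j\notin\tau}\sqrt{\lambda}s_j(1,\lambda)=4\sum_{i=1}^{r}\prod_{j\ne i}\sum_{l=1}^{n_j}\sqrt{\lambda}s_{jl}(1,\lambda),
\end{align*}
the sum on the left being over all odd saturated forests $\tau$ of $G$. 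Combining this with $(-1)^{|V|}=-1$ turns (\ref{detM}) into (\ref{detMforspecialG}); the remainder $o(\lambda^{-r/2})$ is inherited verbatim from Theorem~\ref{direct}, and since each product above has $r-1$ factors of size $O(\lambda^{-1/2})$ by (\ref{asym4}), the main term is of order $\lambda^{-(r-1)/2}$ and genuinely dominates the remainder.

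The step I expect to be the crux is the combinatorial one in the second paragraph: showing that the list of odd saturated forests is exactly the one described — that no component count $\kappa\ge2$ is possible, that no forest can retain two of the $C_j$, and that no cycle can lose two or more edges — all of which follow from the cut-vertex/block structure of $G$; one should also check that the degenerate cases $n_j=1$ (loops) are covered by the same argument, a loop at $v_0$ being simply either kept in full as the distinguished odd cycle or deleted.
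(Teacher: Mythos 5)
Your proposal is correct and follows essentially the same route as the paper: specialize (\ref{detM}) by noting $|E|-|V|=r-1$, $|V|$ odd, and that the odd saturated forests of $G$ are exactly the connected subgraphs obtained by keeping one cycle intact and deleting one edge from each of the others. The paper states these facts in three sentences; your write-up just supplies the block/cut-vertex justification the paper leaves implicit.
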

\begin{proof}
Odd saturated forests of $G$ are connected, hence they are given by leaving out one edge from every but one cycle. Note also that $|V|$ is odd and $|E|-|V|=r-1$. Then the statement follows from (\ref{detM}).
\end{proof}

Substituting the asymptotics (\ref{asym1})-(\ref{asym4}) we get
\begin{cor}
If $\lambda=(2k+1)^2\pi^2+d+o(1)$ and the graph $G$ consists of $r$ odd cycles glued together at a common vertex, then
\begin{align}
\det M=-4\left(\frac{-1}{2\sqrt{\lambda}}\right)^{r-1}p(d)+o(\lambda^{-\frac{r-1}{2}}),
\end{align}
where
\begin{align}\label{eigpol}
p(d)=\sum_{i=1}^{r}\prod_{j\ne i}\left(n_jd-\sum_{l=1}^{n_j}\int_0^{1}q_{jl}\right).
\end{align}
\end{cor}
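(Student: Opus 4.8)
The plan is to obtain this expansion by substituting the single-edge asymptotics directly into the formula (\ref{detMforspecialG}) of the preceding corollary. Only (\ref{asym4}) is needed, since (\ref{detMforspecialG}) involves no other matrix entries. Evaluating at $\lambda=(2k+1)^2\pi^2+d+o(1)$, I would first rewrite (\ref{asym4}) for each edge $e_{jl}$ of the cycle $C_j$ as
\[
\sqrt{\lambda}\,s_{jl}(1,\lambda)=\frac{-1}{2\sqrt{\lambda}}\Bigl(d-\int_0^1 q_{jl}\Bigr)+o\Bigl(\frac{1}{\sqrt{\lambda}}\Bigr),
\]
and then sum over $l=1,\dots,n_j$ to get
\[
\sum_{l=1}^{n_j}\sqrt{\lambda}\,s_{jl}(1,\lambda)=\frac{-1}{2\sqrt{\lambda}}\Bigl(n_j d-\sum_{l=1}^{n_j}\int_0^1 q_{jl}\Bigr)+o\Bigl(\frac{1}{\sqrt{\lambda}}\Bigr).
\]
Thus each of the $r$ inner sums in (\ref{detMforspecialG}) equals $\frac{-1}{2\sqrt{\lambda}}$ times the linear polynomial $n_jd-\sum_{l}\int_0^1 q_{jl}$, up to an $o(1/\sqrt{\lambda})$ error.

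Next I would expand the products $\prod_{j\ne i}$ in (\ref{detMforspecialG}). Each such product has $r-1$ factors of the form $\frac{-1}{2\sqrt{\lambda}}(\cdots)+o(1/\sqrt{\lambda})$, so its leading term is $\bigl(\frac{-1}{2\sqrt{\lambda}}\bigr)^{r-1}\prod_{j\ne i}\bigl(n_jd-\sum_{l}\int_0^1 q_{jl}\bigr)$, and every remaining term contains at least one $o(1/\sqrt{\lambda})$ factor together with at most $r-2$ factors of size $O(1/\sqrt{\lambda})$, hence is $o(\lambda^{-(r-1)/2})$. Summing over $i=1,\dots,r$ assembles the $r$ leading terms into $\bigl(\frac{-1}{2\sqrt{\lambda}}\bigr)^{r-1}p(d)$ with $p$ given by (\ref{eigpol}); the $o(\lambda^{-r/2})$ remainder already present in (\ref{detMforspecialG}) is absorbed into the larger $o(\lambda^{-(r-1)/2})$. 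Multiplying by the constant $-4$ then gives precisely the asserted identity.

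The only point requiring care is the control of the error terms. Here it is harmless: the numbers $d$ and $\int_0^1 q_{jl}$ are fixed quantities independent of $\lambda$, so every factor appearing in every product is uniformly $O(1/\sqrt{\lambda})$, and a product of $r-1$ such factors with at least one replaced by $o(1/\sqrt{\lambda})$ is $O(\lambda^{-(r-2)/2})\cdot o(\lambda^{-1/2})=o(\lambda^{-(r-1)/2})$. Consequently no genuine obstacle arises, and the corollary follows by a direct computation from (\ref{detMforspecialG}) and (\ref{asym4}).
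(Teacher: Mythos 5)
Your proposal is correct and is exactly the paper's argument: the corollary is stated in the paper with the one-line justification ``substituting the asymptotics (\ref{asym1})--(\ref{asym4})'' into (\ref{detMforspecialG}), and you have simply carried out that substitution (only (\ref{asym4}) is needed) with the sign bookkeeping and error estimates done correctly.
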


\begin{lm}\label{pd}
$p(d)=\sum_{i=1}^{r}\prod_{j\ne i}n_jd^{r-1}$.
\end{lm}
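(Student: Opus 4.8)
The plan is to deduce Lemma \ref{pd} from the hypothesis of Theorem \ref{oddcycles}, using the preceding corollary (which pins down the top-order behaviour of $\det M$ near $(2k+1)^2\pi^2$ in terms of the polynomial $p$ from (\ref{eigpol})) together with Lemma \ref{multiplicity}. The idea is that the hypothesis forces the degree-$(r-1)$ polynomial $p$ to have all of its roots at the origin, which is exactly the claimed identity once the leading coefficient has been identified.

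First I would record that $p$ has degree exactly $r-1$. Expanding $p(d)=\sum_{i=1}^{r}\prod_{j\ne i}\bigl(n_jd-\sum_{l=1}^{n_j}\int_0^1q_{jl}\bigr)$ and keeping from each of the $r-1$ factors only the term $n_jd$, the coefficient of $d^{r-1}$ is $\sum_{i=1}^{r}\prod_{j\ne i}n_j$, which is at least $r\ge2$ because every $n_j\ge1$; in particular it is nonzero. Hence $p$ has exactly $r-1$ roots in $\mathbb{C}$ counted with multiplicity, and it suffices to show that all of them are $0$: then $p(d)=\bigl(\sum_{i=1}^{r}\prod_{j\ne i}n_j\bigr)d^{r-1}$, which is the asserted formula.

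Next I would tie these roots to the cluster of eigenvalues around $(2k+1)^2\pi^2$. For large $k$ set $\lambda=(2k+1)^2\pi^2+d$; by the bound $|\lambda_n(q)-\lambda_n(0)|=O(\|q\|_1)$ used in the proof of Lemma \ref{multiplicity}, the cluster eigenvalues correspond to $|d|\le R$ with $R$ independent of $k$, and by Lemma \ref{multiplicity} there are exactly $r-1$ of them. The preceding corollary gives, for such $\lambda$, that $\lambda^{(r-1)/2}\det M$ converges as $k\to\infty$ to $-4(-1/2)^{r-1}p(d)$, uniformly for $|d|\le R$ by the uniformity in $d$ of the asymptotics (\ref{asym1})--(\ref{asym4}). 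Since $\det M$ is entire in $\lambda$ while $\lambda^{(r-1)/2}$ is analytic and nonvanishing near $(2k+1)^2\pi^2$, and the cluster eigenvalues are exactly the zeros there of $\det M$, Hurwitz's theorem applies on a fixed disk in the $d$-plane: the $r-1$ shifts $d$ of the cluster eigenvalues, regarded as a multiset, converge as $k\to\infty$ to the multiset of the $r-1$ roots of $p$.

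Finally I would apply the hypothesis of Theorem \ref{oddcycles}: along an infinite set of $k$, all $r-1$ cluster eigenvalues satisfy $\lambda=(2k+1)^2\pi^2+o(1)$, i.e. their shifts $d$ tend to $0$ along that subsequence. Combined with the convergence of the previous paragraph, every root of $p$ must equal $0$, and the lemma follows. The step I expect to be most delicate is the analytic bookkeeping: verifying that the error term in the preceding corollary is genuinely uniform for $d$ in a fixed disk, and running the Hurwitz/Rouch\'e count so that Lemma \ref{multiplicity} really does prevent eigenvalues from escaping the disk and the zero counts on the two sides match, forcing the limiting multiset to be precisely the set of roots of $p$.
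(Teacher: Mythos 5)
Your proposal is correct and follows essentially the same route as the paper: the preceding corollary identifies the cluster eigenvalue shifts asymptotically with the roots of $p$, Lemma \ref{multiplicity} together with the hypothesis of Theorem \ref{oddcycles} forces all those shifts (hence all roots of $p$) to be $0$, and the leading coefficient is read off from (\ref{eigpol}). You merely make explicit two points the paper leaves terse --- the Hurwitz/Rouch\'e bookkeeping behind ``the roots of $\det M$ are exactly of the form $(2k+1)^2\pi^2+d_j+o(1)$'' and the fact that the theorem's hypothesis (not Lemma \ref{multiplicity} alone) is what kills the roots --- and you correctly get degree $r-1$ where the paper's ``$p(d)=cd^{r}$'' is a typo.
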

\begin{proof}
$\lambda$ is an eigenvalue of the eigenvalue problem (\ref{sch})-(\ref{Kirchhoff}) if and only if $\det M(\lambda)=0$. Let the distinct roots of $p(d)$ be $d_1,\ldots,d_l$. By the previous corollary for $\lambda=(2k+1)^2\pi^2+O(1)$ the distinct roots of $\det M(\lambda)$ are exactly of the form $\lambda=(2k+1)^2\pi^2+d_j+o(1)$ $(1\le j\le l)$. Comparing this with Lemma \ref{multiplicity}, $d_j=0$, hence $p(d)=cd^r$. The principal coefficient is given by (\ref{eigpol}).
\end{proof}

\noindent{\bf Proof of Theorem \ref{oddcycles}.}

Let us introduce $Q_j=\sum_{l=1}^{n_j}\int_0^{1}q_{jl}$. 
For a fixed $m$ substituting $d=\frac{Q_m}{n_m}$ to (\ref{eigpol}), we get
\begin{align}
\frac{1}{\prod_{j=1}^{r}n_j}p(\frac{Q_m}{n_m})=\frac{1}{n_m}\prod_{j\ne m}\left(\frac{Q_m}{n_m}-\frac{Q_j}{n_j}\right)=\sum_{i=1}^{r}\frac{1}{n_i}\left(\frac{Q_m}{n_m}\right)^{r-1}.
\end{align}
Introducing $h_j=\frac{Q_j}{n_j}$, %
\begin{align}
\frac{1}{n_m}\prod_{j\ne m}\left(h_m-h_j\right)=\sum_{i=1}^{r}\frac{1}{n_i}h_m^{r-1}\quad(m=1,2,\ldots,r).
\end{align}
We can assume $h_1\ge h_2\ge\ldots\ge h_r$. Then for $m=2$ the left hand side is nonpositive, hence $h_2\le0$. Similarly, $h_{r-1}\ge0$. Hence for $m=1$,
\begin{align}
\frac{1}{n_1}h_1^{r-2}\left(h_1-h_r\right)=\sum_{i=1}^{r}\frac{1}{n_i}h_1^{r-1}.
\end{align}
If $h_1\ne0$, $\frac{1}{n_1}\left(h_1-h_r\right)=\sum_{i=1}^{r}\frac{1}{n_i}h_1$. Similarly, if $h_r\ne0$, $\frac{1}{n_r}\left(h_r-h_1\right)=\sum_{i=1}^{r}\frac{1}{n_i}h_r$. Subtracting,
\begin{align}
\sum_{i=2}^{r-1}\frac{1}{n_i}(h_1-h_r)=0.
\end{align}
As $n_j$'s are positive, if $r>2$ then $h_1=h_2=\ldots=h_r=0$, while for $r=2$, $h_1n_1+h_2n_2=0$. In both cases,  
\begin{align}
\sum_jQ_j=\sum_{e_{jl}\in G}\int_0^{1}q_{jl}=\int_Gq=0. 
\end{align}
Let us denote the operator of the eigenvalue problem (\ref{sch})-(\ref{Kirchhoff}) by $L$. $\frac{\langle\varphi,L\varphi\rangle}{\langle\varphi,\varphi\rangle}\ge\lambda_0=0$ and equality holds if and only if $\varphi$ is an eigenfunction of $L$. It follows that the constant $1$ must be an eigenfunction corresponding to the eigenvalue $0$. Substituting this to (\ref{sch}) gives $q(x)=0$.
\qed

\end{section}
\begin{section}{Glossary}

A \emph{walk} $W$ in a graph is an alternating sequence of vertices and edges, say $X_0,e_1,\ldots,e_l,X_l$ where $e_i = X_{i-1} X_i$, ($0<i<l$). The length of $W$ is $l$. This walk $W$ is called a \emph{trail} if all its edges are distinct. A \emph{path} is a walk with distinct vertices. A trail whose end vertices coincide (a closed trail) is called a \emph{circuit}. To be precise, a circuit is a closed trail without distinguished endvertices and direction, so that, for example, two triangles sharing a single vertex give rise to precisely two circuits with six edges. If a walk $W = X_0,e_1,\ldots,e_l,X_l$ is such that $l > 3$, $X_0 = X_l$, and the vertices $X_i, 0 < i < l$, are distinct from each other and $X_0$, then $W$ is said to be a \emph{cycle} (\cite{Bollobas1998Modern}, p.5).

The \emph{incidence matrix} of a graph has a row for each vertex and a column for each edge, and is defined as
\begin{align}
R=(r_{ij}), \quad r_{ij}=\left\{\begin{array}{ccc}0&\textrm{ if $e_j$ is not incident to $v_i$,}\\1&\textrm{ if $e_j$ is not a loop and incident to $v_i$,}\\2&\textrm{ if $e_j$ is a loop at $v_i$.}\end{array}\right.
\end{align}
\end{section}

\bibliographystyle{amsplain}
\providecommand{\bysame}{\leavevmode\hbox to3em{\hrulefill}\thinspace}
\providecommand{\MR}{\relax\ifhmode\unskip\space\fi MR }
\providecommand{\MRhref}[2]{%
  \href{http://www.ams.org/mathscinet-getitem?mr=#1}{#2}
}
\providecommand{\href}[2]{#2}

\end{document}